\documentclass{amsart}

\usepackage{amssymb,amscd,amsthm,latexsym, amsmath}
\usepackage[all]{xy}

\newtheorem{Lemma}{Lemma}[section]
\newtheorem{remark}[Lemma]{Remark}
\newtheorem{remarks}[Lemma]{Remarks}
\newtheorem{theorem}[Lemma]{Theorem}
\newtheorem{lemma}[Lemma]{Lemma}
\newtheorem{proposition}[Lemma]{Proposition}

\newtheorem{example}[Lemma]{Example}
\newtheorem{examples}[Lemma]{Examples}

\newdir{ >}{{}*!/-8pt/@{>}}
\def\R{ \mathbb{R} }
\def\C{ \mathbb{C} }
\def\Z{ \mathbb{Z} }

\def\OO{ \mathbb{O} }

\def\NN{ \mathbb{N} }

\newcommand{\Grp}{\mathsf{Grp}}
\newcommand{\lSdp}{\mbox{\rm -}{\mathsf{Sdp}}}

    \newcommand{\Hom}{\operatorname{Hom}}
\newcommand{\Tr}{\operatorname{Tr}}

\def\cleft{\hbox{[\kern-.16em\hbox{[}}}
\def\cright{\hbox{]\kern-.16em\hbox{]}}}

\newcommand{\Rng}{\mathsf{Rng}}

\newcommand{\Aut}{\mathsf{Aut}}

\newcommand{\lMod}{\mbox{\rm -}{\mathsf{Mod}}}
\newcommand{\lAlg}{\mbox{\rm -}{\mathsf{Alg}}}
\newcommand{\BFlAlg}{\mbox{\rm -}{\mathsf{BFAlg}}}

\DeclareMathOperator{\Imm}{Im}

\newcommand{\id}{\mbox{\rm id}}
\newcommand{\Ring}{\mathsf{Ring}}

\begin{document}
		\title{Algebras with a bilinear form, and Idempotent endomorphisms}

\author{Alberto Facchini}
	\address{Dipartimento di Matematica ``Tullio Levi-Civita'', Universit\`a di 
Padova, 35121 Padova, Italy}
 \email{facchini@math.unipd.it}
\thanks{The first author is partially supported by Ministero dell'Istruzione, dell'Universit\`a e della Ricerca (Progetto di ricerca di rilevante interesse nazionale ``Categories, Algebras: Ring-Theoretical and Homological Approaches (CARTHA)''), Fondazione Cariverona (Research project ``Reducing complexity in algebra, logic, combinatorics - REDCOM'' within the framework of the programme Ricerca Scientifica di Eccellenza 2018), and the Department of Mathematics ``Tullio Levi-Civita'' of the University of Padua (Research programme DOR1828909 ``Anelli e categorie di moduli'').}
\author{Leila Heidari Zadeh}
\address{Department of Mathematics and Statistics, Shoushtar Branch, Islamic Azad University,  Shoushtar, Iran.}
 \email{heidaryzadehleila@yahoo.com}
   \keywords{Non-associative algebra; Bilinear form; Idempotent endomorphism; Augmentation; Dorroh extension. \\ {\small 2020 {\it Mathematics Subject Classification.} Primary 15A63, 17A01.}
}
   \begin{abstract} The category  $k\BFlAlg$ of all  $k$-algebras with a bilinear form, whose objects are all pairs $(R,b)$ where $R$ is a $k$-algebra and $b\colon R\times R\to k$ is a bilinear mapping, is equivalent to the category of unital $k$-algebras $A$ for which the canonical homomorphism $(k,1)\to(A,1_A)$ of unital $k$-algebras is a splitting monomorphism in $k \lMod$. Call the left inverses of this splitting monomorphism  ``weak augmentations" of the algebra. There is a category isomorphism between the category of $k$-algebras with a weak augmentation and the category of unital $k$-algebras $(A,b_A)$ with a bilinear form $b_A$ compatible with the multiplication of $A$, i.e., such that $b_A(x,y)=b_A(z,w)$ for all $x,y,z,w\in A$ for which $xy=zw$.
\end{abstract}

    \maketitle

	\date{}
	
	\maketitle
	
	\section*{Introduction}
	
Let $k$ be a commutative ring with identity $1$ and $A$ a not-necessarily associative $k$-algebra. If the $k$-algebra  $A$ has an identity $1_A$, there is a canonical homomorphism $(k,1)\to(A,1_A)$ of unital $k$-algebras.
Suppose that this canonical homomorphism is a splitting monomorphism in the category of unital $k$-algebras. Then $A=I\oplus k$, where $I$ is a two-sided ideal of $A$. Writing the elements of $I\oplus k$ as pairs, we have that $(i,\lambda)(i,\lambda')=(\lambda i'+\lambda' i+ii', \lambda\lambda')$, i.e., we have something very similar to what is sometimes called the  ``Dorroh extension" of $I$, that is, adjoining an identity to the $k$-algebra $I$. The left inverse of the splitting monomorphism $(k,1)\to(A,1_A)$ is an {\em augmentation} for $A$. It is well known that this Dorroh extension yields a category equivalence between the category of $k$-algebras and the category of unital $k$-algebras with an augmentation. The direct-sum decompositions $A=I\oplus k$ or, better, the splitting monomorphisms $(k,1)\to(A,1_A)$, correspond to idempotent endomorphisms of $A$ in the category of unital $k$-algebras having image $k\cdot 1_A$. (For a more detailed explanation and the notations, see Section~\ref{jbop}). 

But very often the $k$-algebra monomorphism $(k,1)\to(A,1_A)$ does not split in the category $k \lAlg_1$ of unital $k$-algebras, but only in the category $k \lMod$ of all $k$-modules. In this case, we only have that $A=S\oplus k$ as $k$-modules, where $S$ is a $k$-submodule of $A$. Denoting by $\pi_S\colon A\to S$ and $\pi_k\colon A\to k$ the two canonical projections associated to the direct-sum decomposition $A=S\oplus k$, and writing the elements of $S\oplus k$ as pairs, we then have that $(s,\lambda)(s',\lambda')=(\lambda s'+\lambda' s+\pi_S(ss'),\lambda\lambda'+\pi_k(ss'))$. In this way we arrive very naturally to the notion of $k$-algebra $(S,b_S)$ with a bilinear form $b_S$, that is, a $k$-bilinear mapping $b_S\colon S\times S\to k$, and its associated extension  $(S,b_S)@_{b_S}k$ (Section~\ref{vcou}). Important examples are those of complex numbers, quaternions, octonions, spin factors, Galois extensions of fields, etc.~(Examples~\ref{9} and~\ref{Galois}.)

More precisely, the category of $k$-algebras with a bilinear form is equivalent to the category of unital $k$-algebras $A$ for which the canonical homomorphism $(k,1)\to(A,1_A)$ of unital $k$-algebras is a splitting monomorphism in $k \lMod$ (Theorem~\ref{2}). We call the left inverse of this splitting monomorphism a {\em weak augmentation} of the $k$-algebra $A$. In Section~\ref{3} we consider the compatibility between bilinear forms and multiplication and show that there is a category isomorphism between the category of $k$-algebras with a weak augmentation and the category of unital $k$-algebras with a compatible bilinear form (Theorem~\ref{3'}). In the final section we generalize the ideas of the previous sections to study ring extensions $R \subseteq S$. This kind of extensions also appears in \cite{PJ} and \cite{P}.

	\section{$k$-algebras. Preliminary notions and basic facts.}\label{jbop}
	
	Let $k$ be a commutative ring with identity $1$. With the term {\em $k$-algebra} we mean a $k$-module $R$ with a further $k$-bilinear operation $$\cdot\colon R\times R\to R,\qquad (r,r')\mapsto r\cdot r'=rr'.$$ (Equivalently, with a $k$-module morphism $\mu\colon R\otimes_kR\to R$. See \cite{Bou} and \cite[Section~2]{meltem}). Clearly, $k$-algebras form a category $k \lAlg$, whose morphisms $f\colon R\to S$ are the $k$-module morphisms $f\colon R\to S$ such that $f(rr')=f(r)f(r')$ for every $r,r'\in R$. {\em Abelian} $k$-algebras are those for which $rr'=0$ for every $r,r'\in R$. The full subcategory of $k \lAlg$ whose objects are all abelian $k$-algebras is clearly isomorphic to the category $k \lMod$ of all $k$-modules, hence is a Grothendieck category. Zero objects of $k \lAlg$ are the $k$-algebras with one element, so that $k \lAlg$ is a pointed category. The kernel of a morphism $f\colon R\to S$ is the inclusion of the subalgebra $\ker(f):=\{\,r\in R\mid f(r)=0_S\,\}$ of $R$ into $R$ itself. It is easy to see that $\ker(f)$ is in fact an ideal of $R$. Here a {\em subalgebra} (an {\em ideal}) of $R$ is a $k$-submodule $X$ of $R$ such that $xy\in X$ for every $x,y\in X$ (such that $rx\in X$ and $xr\in X$ for every $r\in R$ and $x\in X$, respectively).
	See~\cite{tim van der linde}.
	
	The $k$-algebra $k$ is a rather special object in the category $k \lAlg$, for instance it is commutative, associative, and has the identity $1$. An {\em identity} in a $k$-algebra $R$ is an element, which we will denote by $1_R$, such that $r\cdot1_R=1_R\cdot r=r$ for every $r\in R$. If $R$ has an identity, we will say that $R$ is {\em unital}. An element $r$ of $R$ is {\em idempotent} if $r^2:=r\cdot r=r$ (since $R$ is not-necessarily associative, $r^n$ is not defined for $n\ge 3$, but it is defined for $n=1$ and $n=2$). The zero of $R$ is always an idempotent element of $R$ (because of the $k$-bilinearity of multiplication), and the identity, when it exists, is also an idempotent element of $R$.
	
	For any $k$-algebra $R$, there is a one-to-one correspondence between the set $\Hom_{k \lAlg}(k,R)$ and the set of all idempotent elements of $R$. For any idempotent element $r$ of $R$ the corresponding morphism $\varphi_r\colon k\to R$ is defined by $\varphi_r(\lambda)=\lambda r$ for every $\lambda \in k$. Conversely, for any morphism $\varphi\colon k\to R$ the corresponding idempotent element of $R$ is $\varphi(1)$.
	
	In particular, when the $k$-algebra $R$ has an identity $1_R$, for $r=1_R$ we have a morphism $\varphi_{1_R}\colon k\to R$ is defined by $\varphi_{1_R}(\lambda)=\lambda {1_R}$ for every $\lambda \in k$. The image of this morphism $\varphi_{1_R}$ is necessarily contained in the center of $R$.
	
	Now let  $k \lAlg_1$ be the category of all unital $k$-algebras. Its morphisms $f\colon R\to S$ are the $k$-algebra morphisms $f$ such that $f(1_R)=1_S$. There is also a third category involved. It is the category $k \lAlg_a$ of all unital $k$-algebras with an {\em augmentation}. Its objects are all the pairs $(A,\varepsilon_A)$, where $A$ is a unital $k$-algebra and $\varepsilon_A\colon A\to k$ is a morphism in $k \lAlg_1$ that is a left inverse of $\varphi_{1_A}$. The morphisms $f\colon (A,\varepsilon_A)\to (B,\varepsilon_{B})$ are the morphisms $f\colon A\to B$ in $k \lAlg_1$ such that $\varepsilon_{B}f=\varepsilon_A$. 
	
	\bigskip
	
	It is well known that:
	
	\begin{theorem}\label{1} There is a category equivalence $F\colon k \lAlg\to k \lAlg_a$ that associates with any object $R$ of $k \lAlg$ the $k$-algebra with augmentation $$F(R):=(R\# k, \pi_2),$$ where $R\#k$  is the $k$-module direct sum $R\oplus k$ with multiplication defined by
$$(r,\lambda)(r',\lambda')=(rr'+\lambda r'+\lambda' r, \lambda\lambda')$$ for every $(r,\lambda)(r',\lambda')\in  R\# k$, and the augmentation $\pi_2\colon R\# k=R\oplus k\to k$ is the second canonical projection. \end{theorem}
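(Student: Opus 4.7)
The plan is to exhibit an explicit quasi-inverse $G\colon k\lAlg_a\to k\lAlg$ and to check $GF\cong\id$ and $FG\cong\id$ via natural isomorphisms built from the direct-sum decomposition induced by the augmentation.

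First I would verify that $F$ is well defined. The multiplication on $R\oplus k$ is manifestly $k$-bilinear; associativity is not required since $k\lAlg$ has no such axiom. A direct computation shows that $(0,1)$ is a two-sided identity, so $R\#k$ lies in $k\lAlg_1$. Since $\varphi_{1_{R\#k}}(\lambda)=(0,\lambda)$, the projection $\pi_2$ satisfies $\pi_2\varphi_{1_{R\#k}}=\id_k$, giving an object of $k\lAlg_a$. On morphisms, $F(f)=f\oplus\id_k$ is checked to be a morphism in $k\lAlg_a$, and functoriality is immediate.

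Next I would construct the quasi-inverse $G$. Given $(A,\varepsilon_A)$ in $k\lAlg_a$, the map $\varphi_{1_A}\colon k\to A$ is a split monomorphism of $k$-modules with retraction $\varepsilon_A$, so there is a $k$-module decomposition $A=\ker(\varepsilon_A)\oplus k\cdot 1_A$. The subset $I:=\ker(\varepsilon_A)$ is a (two-sided) ideal of $A$ because $\varepsilon_A$ is a $k$-algebra homomorphism, and therefore inherits a $k$-algebra structure (without identity, in general). Define $G(A,\varepsilon_A):=I$, and for a morphism $f\colon(A,\varepsilon_A)\to(B,\varepsilon_B)$ in $k\lAlg_a$ the condition $\varepsilon_B f=\varepsilon_A$ forces $f(I)\subseteq\ker(\varepsilon_B)$, so $G(f):=f|_I$ is well defined and functoriality is clear.

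Finally I would exhibit the two natural isomorphisms. For any $R$ in $k\lAlg$, the kernel of $\pi_2\colon R\#k\to k$ is exactly $R\oplus 0$, giving a natural isomorphism $\eta_R\colon R\to GF(R)$, $r\mapsto(r,0)$. In the other direction, for $(A,\varepsilon_A)$ in $k\lAlg_a$, define $\theta_{(A,\varepsilon_A)}\colon FG(A,\varepsilon_A)=(I\#k,\pi_2)\to (A,\varepsilon_A)$ by $(i,\lambda)\mapsto i+\lambda 1_A$. This is a $k$-module isomorphism by the direct-sum decomposition above, and $\varepsilon_A\theta=\pi_2$ is immediate. The main verification—and the only genuinely computational step—is that $\theta$ respects multiplication: if $a=i+\lambda 1_A$ and $a'=i'+\lambda'1_A$ then
\[
aa'\;=\;ii'+\lambda i'+\lambda' i+\lambda\lambda'\,1_A,
\]
and because $ii'\in I$ lies in the first summand while $\lambda\lambda'1_A$ lies in the second, this matches the Dorroh-type multiplication defining $I\#k$. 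Naturality of $\eta$ and $\theta$ is straightforward from the definition of $G(f)$ and $F(f)$.

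The only subtle point is the last compatibility: one must be sure that $\lambda i'+\lambda' i$ stays in $I$ (which is automatic since $I$ is a $k$-submodule) and that $ii'$ has no $1_A$-component. The latter holds because $I$ is an ideal and hence closed under multiplication, so $\varepsilon_A(ii')=\varepsilon_A(i)\varepsilon_A(i')=0$. With this in hand the theorem follows.
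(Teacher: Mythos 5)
Your proof is correct and follows exactly the route the paper indicates: the paper states Theorem~\ref{1} as well known, only pointing out that the quasi-inverse sends $(A,\varepsilon_A)$ to $\ker(\varepsilon_A)$, and your argument is precisely this construction with the natural isomorphisms $r\mapsto(r,0)$ and $(i,\lambda)\mapsto i+\lambda 1_A$ and their verifications written out in full. The only detail worth adding explicitly is the (trivial) check that $\pi_2$ is multiplicative and unital, so that $(R\#k,\pi_2)$ is indeed an object of $k\lAlg_a$.
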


The $k$-algebra $R\# k$ is clearly a unital $k$-algebra with augmentation. The quasi-inverse of the equivalence $F$ is the functor $k \lAlg_a\to k \lAlg$ that associates with each $k$-algebra with augmentation $(A,\varepsilon_A)$ the kernel $\ker(\varepsilon_A)$ of the augmentation, which is an ideal, hence a $k$-subalgebra, of $A$.

\medskip

Notice that, for every $k$-algebra $R$,  there is a short exact sequence $$0\to R\to R\# k\to k\to0$$ in the category $k \lAlg$, which is a pointed category with zero objects the algebras with one element. In particular, $R$ is an ideal of the unital $k$-algebra $R\# k$.

As usual, all the identities seen above can be expressed via diagrams. For instance, a $k$-module morphism $f\colon R\to S$ is a $k$-algebra morphism if and only if the diagram $$\xymatrix{
R\otimes_kR\ar[r]^{\mu_R}\ar[d]_{f\otimes f}&R\ar[d]^f \\
S\otimes_kS\ar[r]_{\mu_S} &S }$$ commutes; if, moreover, $R$ and $S$ are unital, then $f$ is a morphism in $k \lAlg_1$ if and only if the diagram $$\xymatrix{ & R\ar[dd]^{f} \\
k \ar[ur]^{\varphi_{1_R}} \ar[dr]_{\varphi_{1_S}} & \\
& S}$$ commutes; also, $f$ is a morphism in $k \lAlg_a$ if and only if the diagram $$\xymatrix{ & R\ar[dd]^{f} \ar[dr]^{\varepsilon_R} &\\
k \ar[ur]^{\varphi_{1_R}} \ar[dr]_{\varphi_{1_S}} & & k\\
& S \ar[ur]_{\varepsilon_S} &}$$ commutes.

\medskip

Theorem \ref{1} explains another reason why the $k$-algebra $k$ is special: a zero object in the category $k \lAlg$ is any zero algebra with one element, so that the zero object in the category $k\lAlg_a$ is the augmented $k$-algebra $F(0)=(k,\id_k)$, that is, the $k$-algebra $k$ is the image of this augmented algebra via the forgetful functor $k\lAlg_a\to k\lAlg_1$. The functor $(-)\#k\colon k \lAlg\to k\lAlg_1$ is  the left adjoint of the forgetful functor $k\lAlg_1\to k\lAlg$.

\bigskip

For any $k$-algebra with augmentation $(A,\varepsilon_A)$, we have $k$-algebras morphisms $
 \xymatrix@1{ k\ar[r]^{\varphi_{1_A}} &
 A\ar[r]^{\varepsilon_A} &
k}$
 whose composite mapping is the identity of $k$. Hence the composite mapping $
 \xymatrix@1{ 
 A\ar[r]^{\varepsilon_A} &
k\ar[r]^{\varphi_{1_A}} & A}$ is an idempotent endomorphism of the $k$-algebra~$A$.

\begin{theorem} For any $k$-algebra $A$ with identity $1_A$, there is a one-to-one correspondence between the set of all idempotent $k$-algebra endomorphisms of $A$ with image $k\cdot 1_A$ and the set of all augmentations of $A$.
\end{theorem}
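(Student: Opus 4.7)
The plan is to construct inverse bijections explicitly. In the forward direction, I would associate to each augmentation $\varepsilon_A\colon A\to k$ the composite $e_\varepsilon := \varphi_{1_A}\circ\varepsilon_A\colon A\to A$. The paragraph preceding the statement already records that $e_\varepsilon$ is an idempotent $k$-algebra endomorphism of $A$. That $\Imm(e_\varepsilon)=k\cdot 1_A$ is immediate from the factorisation through $\varphi_{1_A}$ together with the surjectivity of $\varepsilon_A$, which is forced by the splitting identity $\varepsilon_A\circ\varphi_{1_A}=\id_k$.

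For the backward direction, given an idempotent $k$-algebra endomorphism $e$ of $A$ with image $k\cdot 1_A$, I would define $\varepsilon_e\colon A\to k$ by the equation $e(a)=\varepsilon_e(a)\cdot 1_A$. This requires $\varphi_{1_A}\colon k\to A$ to be injective so that $\varepsilon_e(a)$ is unambiguous; I would dispatch this by reading the statement with the tacit hypothesis that $\varphi_{1_A}$ is a monomorphism, and noting that no augmentation can exist otherwise, so on the augmentation side this costs nothing. Granting injectivity, $\varepsilon_e$ is $k$-linear by unwinding the linearity of $e$. Multiplicativity follows from $\varepsilon_e(ab)\cdot 1_A=e(ab)=e(a)\,e(b)=\varepsilon_e(a)\varepsilon_e(b)\cdot 1_A$, after cancelling $\varphi_{1_A}$. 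The identity $\varepsilon_e(1_A)=1$ follows from $e(1_A)=1_A$, which holds because $1_A\in\Imm(e)=k\cdot 1_A$ combined with the idempotency of $e$ forces $e$ to act as the identity on its own image. Finally, $\varepsilon_e\circ\varphi_{1_A}=\id_k$ is immediate from the defining equation applied to $a=\lambda\cdot 1_A$.

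The two constructions are mutually inverse essentially by definition: $\varphi_{1_A}\circ\varepsilon_e=e$ is exactly the equation defining $\varepsilon_e$, and $\varepsilon_{e_\varepsilon}=\varepsilon_A$ follows by applying injectivity of $\varphi_{1_A}$ to $\varepsilon_{e_\varepsilon}(a)\cdot 1_A=e_\varepsilon(a)=\varphi_{1_A}(\varepsilon_A(a))=\varepsilon_A(a)\cdot 1_A$. The only delicate step is the injectivity of $\varphi_{1_A}$ needed to make sense of $\varepsilon_e$; the remaining checks that $\varepsilon_e$ is a unital $k$-algebra morphism are routine bookkeeping, all obtained by stripping the common factor $1_A$ from identities that $e$ satisfies as an idempotent $k$-algebra morphism.
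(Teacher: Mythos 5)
Your construction is correct and is exactly the argument the paper intends: the paper states this theorem without a separate proof, having already observed in the preceding paragraph that $\varphi_{1_A}\circ\varepsilon_A$ is an idempotent endomorphism, and your backward map $e\mapsto\varepsilon_e$ with $e(a)=\varepsilon_e(a)\cdot 1_A$ is the evident inverse. Your tacit injectivity hypothesis on $\varphi_{1_A}$ is likewise implicit in the paper's statement (without it the endomorphism side can be nonempty while no augmentation exists, e.g.\ $k=\Z/4\Z$, $A=\Z/2\Z$), so reading it in is the right call.
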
 

For example, let $k$ be a field and $A$ be the ring $k[x]$ of polynomials in one indeterminate $x$ and coefficients in $k$. This is a unital $k$-algebra. For every element $\alpha\in k$, there is an idempotent $k$-algebra endomorphism $v_\alpha\colon k[x]\to k[x]$, $v_\alpha\colon f(x)\mapsto f(\alpha)$, of the $k$-algebra $k[x]$. All these idempotent endomorphisms have image $k$. The augmentation corresponding to the idempotent endomorphism $v_\alpha$ is the augmentation $k[x]\to k$, $f(x)\in k[x]\mapsto f(\alpha)\in k$.

\section{$k$-algebras with a bilinear form}\label{vcou}

	The construction of the unital $k$-algebra $R\# k$ with augmentation is very convenient, but there is a further useful generalization of this construction. Assume that the $k$-algebra $R$ is also endowed with a bilinear form ($=\,k$-bilinear mapping) $b_R\colon R\times R\to k.$ Let  $(R,b_R)@_{b_R} k$ be the $k$-algebra that, as a $k$-module, is the direct sum $R\oplus k$, and with multiplication  defined by \begin{equation}(r,\lambda)(r',\lambda')=(rr'+\lambda r'+\lambda'r, \lambda\lambda'-b_R(r,r').\label{E}\end{equation} The construction of $(R,b_R)@_{b_R} k$ can be viewed as a sort of deformation of the previously constructed $k$-algebra $R\# k$, perturbated by the $k$-bilinear mapping $b_R$. Clearly, when $b_R$ is the zero bilinear mapping, then $(R,b_R)@_{b_R}k$ is exactly $R\# k$.
	
	Let us show, giving some examples, that this construction $-@k$  is very frequent in Algebra.
	
	\begin{examples}\label{9}{\rm 
  (a)  {\sl Complex numbers.} Let $k$ be the field $\R$ of real numbers. As far as $R$ is concerned, let $R$ be the one-dimensional real abelian algebra, so that $R$ can be seen as $\R$ itself but with the algebra multiplication always equal to zero. As a bilinear form $b_R\colon R\times R\to k$ take the usual multiplication in $\R$. Then $(R,b_R)@_{b_R}k$ is exactly the $\R$-algebra of complex numbers.  
  
 (b) {\sl Quaternions.} As in (a), let $k$ be the field $\R$ of real numbers. Now let $R$ be the three-dimensional real $\R$-vector space of Physics. As $\R$-algebra multiplication on $R$ take the cross product $\times$. As bilinear form $b_R\colon R\times R\to k$  take the usual scalar product of vectors. Then $(R,b_R)@_{b_R}k$ now turns out to be the division $\R$-algebra of quaternions. 
 
    (c) {\sl Octonions.} The construction of octonions is also very similar. Let us recall the standard notation concerning octonions. The octonions form a vector spaces $\OO$ over $k:=\R$ of dimension $8$, with basis formed by the {\em unit octonions} $e_0,e_1,e_2,\dots,e_7$. Hence every octonion $x$ can be written in a unique way as a linear combination $x=\sum_{i=0}^7x_ie_i$ with real coefficients $x_i$. The {\em conjugate} of the octonion $x$ is $x^*:=x_0e_0-\sum_{i=1}^7x_ie_i$. The {\em real part} of $x$ is $x_0e_0$, and the {\em imaginary part} of $x$ is $\sum_{i=1}^7x_ie_i$. The set of all purely imaginary octonions span a $7$-dimensional subspace of $\OO$, denoted by $\Imm(\OO)$. Define a ``new multiplication'' on $\Imm(\OO)$ via the multiplication table
    
    \bigskip

  \begin{center}
    \begin{tabular}{|c|c|c|c|c|c|c|c|}
    \hline
         $e_ie_j$ & $e_1$ & $e_2$ & $e_3$ & $e_4$ & $e_5$ & $e_6$ & $e_7$  \\
     \hline
         $e_1$ & $0$ & $e_3$ & $-e_2$ & $e_5$ & $-e_4$ & $e_7$ & $e_6$  \\
    \hline
         $e_2$ & $-e_3$ & $0$ & $e_1$ & $e_6$ & $e_7$ & $-e_4$ & $-e_5$  \\
    \hline
         $e_3$ & $e_2$ & $-e_1$ & $0$ & $e_7$ & $-e_6$ & $e_5$ & $-e_4$  \\
    \hline
         $e_4$ & $-e_5$ & $-e_6$ & $-e_7$ & $0$ & $e_1$ & $e_2$ & $e_3$  \\
    \hline
         $e_5$ & $e_4$ & $-e_7$ & $e_6$ & $-e_1$ & $0$ & $-e_3$ & $e_2$  \\
    \hline
         $e_6$ & $e_7$ & $e_4$ & $-e_5$ & $-e_2$ & $e_3$ & $0$ & $-e_1$  \\
    \hline
         $e_7$ & $-e_6$ & $e_5$ & $e_4$ & $-e_3$ & $-e_2$ & $e_1$ & $0$  \\
       \hline      
    \end{tabular}
  \end{center}
   
    \bigskip
    
    \noindent and extending by linearity. Notice that this multiplication table is skew-symmetric, so that the multiplication on $\Imm(\OO)$ turns out to be anticommutative. More precisely, this is the multiplication table of the {\em seven-dimensional cross product}, a bilinear operation on vectors in the seven-dimensional Euclidean space with the properties of orthogonality and magnitude \cite{BG, MM}. As bilinear form $b\colon \Imm(\OO)\times \Imm(\OO)\to \R$ take the usual Euclidean scalar product. Now $(\Imm(\OO),b)@_{b}\R$ now turns out to be the $\R$-algebra of octonions. 

These first three examples should not surprise. It is a famous result by Hurwitz (\cite{H}, or see \cite[7.6]{J}) that the only composition $k$-algebras when $k$ is a field of characteristic $\ne 2$ are $k$, quadratic algebras, quaternions, and octonions. See \cite[p.~447]{J} and \cite[Theorem I and its proof]{MM}. In these cases, the field $k$ is always the field of real numbers, the bilinear form is always the dot product in an Euclidean space, and the multiplication in the $\R$-algebra is always the cross product. Our construction of $(R,b_{R})@_{b_R}k$ in this special case is called by Beno Eckmann ``a well-known elementary construction'' \cite[p.~338, at the top of the page]{BE}. Notice that in these classical examples, the norm is always multiplicative, that is, the mapping $R\to k$, defined by $r\mapsto b_R(r,r)$ for all $r\in R$, is a magma morphism (i.e., it preserves multiplication).

   (d) Composition algebras are examples of algebras with a bilinear form.

(e) Jordan, von Neumann and Wigner \cite{JNW} proved that every finite-dimensional formally real Jordan algebra is a direct sum of simple ones. There are five families of simple finite-dimensional formally real Jordan algebras: the $n\times n$ self-adjoint real (complex, quaternionic, octonionic for $n\le 3$, respectively) matrices with product $a\circ b=\frac{1}{2}(ab+ba)$, plus the so called {\em spin factors}. These are the real vector spaces $\R^n\oplus \R$ with the product $(x,t)\circ(x',t')=(tx'+t'x, x\cdot x'+tt')$. They are clearly examples of our construction applied to the real $n$-space $\R^n$, viewed as an abelian $\R$-algebra, endowed of the opposite of the dot product. These Jordan algebras play a role in coulor perception, see \cite{BP}.

(f) Let $k$ be a field, and $R$ be an abelian $k$-algebra with a bilinear form $b_R$. Let $Q$ be the quadratic form associated with $b_R$, so that $Q(r)=b_R(r,r)$ for every $r\in R$. Then $(R,{b_R})@_{b_R}k$ is an associative unital algebra, and the inclusion $i\colon R\to (R,{b_R})@_{b_R}k$ is such that $i(r)^2=(r,0)(r,0)=(0,-b(r,r))=-Q(r)1_{(R,{b_R})@_{b_R}k}$. By the universal property of Clifford algebras, there is a unique $k$-algebra morphism of the Clifford algebra Cl$(R,-Q)$ onto $(R,{b_R})@_{b_R}k$. This will be generalized and improved in Theorem~\ref{associative}(c) and Proposition~\ref{na}.

(g) {\em Quadratic algebras} are obtained doubling a field $k$ \cite[page 446]{J}. Hence, suppose that $k$ is a field. Let $R$ be the one-dimensional abelian $k$-algebra, so that $R$ can be vewed as $k$ with zero multiplication. Fix $c\in k$ and let $k\colon R\times R\to k$ be the $k$-bilinear form defined by $b(x,y)=-cxy$. Then the product in $R@_kk$ is defined by $(x,\lambda)(y,\mu)=(\lambda y+\mu x, \lambda\mu+cxy)$, i.e., $R@_kk$ is exactly the $c$-double of $k$, that is, the quadratic algebra obtained doubling the unital $k$-algebra $k$, with the identity as involution, and $c\in k$ \cite[page 444, (94)]{J}. }\end{examples}

	\bigskip
	
	Let us try to be more precise. Fix a commutative unital ring $k$. Let $ k\BFlAlg$ be the category of all  {\em $k$-algebras with a bilinear form}, whose objects are all the pairs $(R,b)$, where $R$ is a $k$-algebra and $b\colon R\times R\to k$ is a $k$-bilinear mapping (equivalently, $b\colon R\otimes_k R\to k$ is a $k$-linear mapping). The morphisms $\varphi\colon (R,b_R)\to (S,b_S)$ in this category are the $k$-algebra morphisms $\varphi\colon R\to S$ that are {\em orthogonal transformations}, that is, $b_R(r,r')=b_s(\varphi(r),\varphi(r'))$ for every $r,r'\in R$. As usual, this identity can be expressed in term of commutativity of the diagram

$$\xymatrix{  R\otimes_kR\ar[dd]_{\varphi\otimes\varphi}  \ar[dr]^{b_R} & \\
& k. \\
S\otimes_kS\ar[ur]_{b_S}} $$
	
	\bigskip
	
	Similarly, there is a category $ k\BFlAlg_1$ of all  {\em unital $k$-algebras with a bilinear form}, whose objects are all pairs $(R,b)$, where $R$ is now also required to be a $k$-algebra with identity $1_R$. The morphisms $\varphi\colon (R,b_R)\to (S,b_S)$ in the category $ k\BFlAlg_1$ are required to send $1_R$ to $1_S$. Finally, there is a category $k \lAlg_{wa}$ of all $k$-algebras $A$ with an identity $1_A$ and with a {\em weak augmentation}, that is, a morphism $\varepsilon_A\colon A\to k$ in $k \lMod$ that is a left inverse of $\varphi_{1_A}$. Notice that the difference between augmentation and weak augmentation is that augmentations are morphisms of $k$-algebras, while weak augmentations are only morphisms of $k$-modules. We will write the objects of $k \lAlg_{wa}$ as pairs $(A, \varepsilon_A)$. The morphisms $f\colon (A,\varepsilon_A)\to (B,\varepsilon_{B})$ are the morphisms $f\colon A\to B$ in $k \lAlg_1$ such that $\varepsilon_{B}f=\varepsilon_A$. 
		
\begin{remark}{\rm In the definition of $k$-algebra with a bilinear form given above, we don't require any form of compatibility between the multiplication of the $k$-algebra and the bilinear form. Compatibility axioms will be studied in Section~\ref{3}.}\end{remark}

\begin{lemma}\label{2.2} Let $(R,b_R)$ and $ (S,b_S)$ be two $k$-algebras with a bilinear form, and $\psi\colon R\to S$ be a $k$-module morphism. Then $\varphi=:(\psi\oplus\id_k)\colon (R,b_R)@_{b_R}k\to (S,b_S)@_{b_S}k$ is a morphism of $k$-algebras if and only if $\psi\colon (R,b_R)\to (S,b_S)$ is a morphism of $k$-algebras with a bilinear form. Moreover, if these equivalent conditions are satisfied, then $\varphi$ is necessarily a morphism of unital $k$-algebras.\end{lemma}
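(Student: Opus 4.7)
The proof is essentially a direct calculation comparing the two sides of the multiplicativity equation for $\varphi$, so my plan is to lay out the computation and read off both implications simultaneously.

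First I would unpack the definition of $\varphi$: since $\varphi = \psi \oplus \id_k$, it sends $(r,\lambda)$ to $(\psi(r), \lambda)$. Using the multiplication rule (\ref{E}), I compute
\[
\varphi\bigl((r,\lambda)(r',\lambda')\bigr) = \bigl(\psi(rr') + \lambda\psi(r') + \lambda'\psi(r),\ \lambda\lambda' - b_R(r,r')\bigr),
\]
where I have used that $\psi$ is $k$-linear. On the other side,
\[
\varphi(r,\lambda)\varphi(r',\lambda') = \bigl(\psi(r)\psi(r') + \lambda\psi(r') + \lambda'\psi(r),\ \lambda\lambda' - b_S(\psi(r),\psi(r'))\bigr).
\]

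Comparing the two components, $\varphi$ is a morphism of $k$-algebras if and only if $\psi(rr') = \psi(r)\psi(r')$ and $b_R(r,r') = b_S(\psi(r),\psi(r'))$ hold for all $r, r' \in R$. The first equation says that $\psi$ is a $k$-algebra morphism; the second says that $\psi$ is an orthogonal transformation. Together these are exactly the condition that $\psi\colon (R,b_R)\to(S,b_S)$ is a morphism in $k\BFlAlg$. To obtain the ``only if'' direction rigorously, one would specialize: taking $\lambda=\lambda'=0$ isolates the identity $\psi(rr')-\psi(r)\psi(r') = 0$ in the first coordinate and $b_R(r,r') = b_S(\psi(r),\psi(r'))$ in the second, so the two equalities really are independent.

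For the final assertion, I would note that the identity element of $(R,b_R)@_{b_R}k$ is $(0,1)$: indeed, $(r,\lambda)(0,1) = (0 + \lambda\cdot 0 + 1\cdot r,\ \lambda - b_R(r,0)) = (r,\lambda)$ by $k$-bilinearity of $b_R$, and similarly on the other side. Since $\psi(0)=0$, we have $\varphi(0,1) = (0,1)$, so $\varphi$ automatically preserves the identity. There is no real obstacle here; the only thing one must be careful about is not assuming associativity or commutativity of the multiplication, but the computation above only uses $k$-bilinearity of the operations and the fact that $\psi$ is a $k$-module map.
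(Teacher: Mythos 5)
Your proof is correct and follows essentially the same route as the paper: expand both sides of $\varphi\bigl((r,\lambda)(r',\lambda')\bigr)=\varphi(r,\lambda)\varphi(r',\lambda')$ using the multiplication rule and observe that equality of the two components is precisely the conjunction of ``$\psi$ is a $k$-algebra morphism'' and ``$b_R=b_S\circ(\psi\otimes\psi)$''. Your explicit verification that $(0,1)$ is the identity and that $\varphi(0,1)=(0,1)$ supplies the ``moreover'' clause, which the paper states without spelling out, so nothing is missing.
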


\begin{proof} The $k$-module morphism $\varphi$ is a $k$-algebra morphism if and only if  $\varphi$ preserves $k$-algebra multiplication, that is, if and only if $$(\psi\oplus\id_k)((r,\lambda)(r',\lambda'))=(\psi\oplus\id_k)(r,\lambda)(\psi\oplus\id_k)(r',\lambda'),$$ that is, if and only if $$\begin{array}{l}(\psi\oplus\id_k)(rr'+\lambda r'+\lambda'r, \lambda\lambda'-b_R(r,r'))= \\ \qquad =(\psi(r)\psi(r')+\lambda \psi(r')+\lambda'\psi(r), \lambda\lambda'-b_S(\psi(r),\psi(r'))).\end{array} $$ It is now easy to see that this occurs if and only if $\psi$ is a $k$-algebra morphism and $b_R=b_S\circ(\psi\otimes\psi)$.\end{proof}

	\begin{theorem}\label{2} There is an equivalence of categories $G\colon k \BFlAlg\to k \lAlg_{wa}$ that associates with any object $(R,b_R)$ of $k \BFlAlg$ the $k$-algebra with a weak augmentation $G(R,b_R):=((R,{b_R})@_{b_R} k,\pi_2)$, where $(R,{b_R})@_{b_R}k$ is the $k$-module direct sum $R\oplus k$ with multiplication defined by
$$(r,\lambda)(r',\lambda')=(rr'+\lambda r'+\lambda'r, \lambda\lambda'-b_R(r,r'))$$ for every $(r,\lambda),(r',\lambda')\in  (R,{b_R})@_{b_R} k$, with identity $(0_R,1_k)$, and with weak augmentation the second canonical projection $$\pi_2\colon (R,{b_R})@_{b_R}k=R\oplus k\to k.$$ \end{theorem}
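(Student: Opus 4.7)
The plan is to construct $G$ explicitly on objects and morphisms, and then exhibit a quasi-inverse $H\colon k\lAlg_{wa}\to k\BFlAlg$ together with the two natural isomorphisms.

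First I would check that $G$ is well-defined on objects. Direct inspection of the formula
$(r,\lambda)(r',\lambda')=(rr'+\lambda r'+\lambda'r,\ \lambda\lambda'-b_R(r,r'))$
shows it is $k$-bilinear, and $(0_R,1_k)$ is a two-sided identity because plugging $\lambda'=1$, $r'=0$ gives $(r,\lambda)$ and similarly on the other side. The second projection $\pi_2$ is clearly $k$-linear and left-inverse to $\varphi_{1_A}\colon \lambda\mapsto(0_R,\lambda)$, so $((R,b_R)@_{b_R}k,\pi_2)$ lies in $k\lAlg_{wa}$. On morphisms, I set $G(\psi):=\psi\oplus\id_k$; by Lemma~\ref{2.2} this is a morphism in $k\lAlg_1$, and visibly $\pi_2\circ(\psi\oplus\id_k)=\pi_2$, so it is a morphism in $k\lAlg_{wa}$. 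Functoriality is immediate from the formula.

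Next I would construct the quasi-inverse. Given $(A,\varepsilon_A)$, the identity $\varepsilon_A\varphi_{1_A}=\id_k$ gives a $k$-module decomposition $A=S\oplus k\cdot 1_A$ with $S:=\ker(\varepsilon_A)$; write $\pi_S=\id_A-\varphi_{1_A}\varepsilon_A\colon A\to S$. On $S$ define
$$s\cdot_S s':=\pi_S(ss'),\qquad b_S(s,s'):=-\varepsilon_A(ss').$$
Both are $k$-bilinear by bilinearity of the multiplication in $A$ and $k$-linearity of $\varepsilon_A$, so $(S,b_S)\in k\BFlAlg$. On morphisms $f\colon(A,\varepsilon_A)\to(B,\varepsilon_B)$ I define $H(f)$ as the restriction of $f$ to $S_A$, which lands in $S_B$ because $\varepsilon_B f=\varepsilon_A$. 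A short computation using $f(1_A)=1_B$ shows $H(f)$ preserves $\cdot_S$ and the bilinear form, hence is a morphism in $k\BFlAlg$.

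Finally I would verify the natural isomorphisms. For $HG$: in $(R,b_R)@_{b_R}k$ we have $\ker(\pi_2)=R\oplus 0$, and from $(r,0)(r',0)=(rr',-b_R(r,r'))$ I read off directly that $(r,0)\cdot_S(r',0)=(rr',0)$ and $b_S((r,0),(r',0))=b_R(r,r')$, so $r\mapsto(r,0)$ is a natural isomorphism $(R,b_R)\cong HG(R,b_R)$. For $GH$: I define $\Phi\colon(S,b_S)@_{b_S}k\to A$ by $\Phi(s,\lambda)=s+\lambda\cdot 1_A$, a $k$-module isomorphism by the direct-sum decomposition. Substituting $\pi_S(ss')=ss'-\varepsilon_A(ss')\cdot 1_A$ into the expansion of $\Phi((s,\lambda)(s',\lambda'))$ makes the terms $\pm\varepsilon_A(ss')\cdot 1_A$ cancel and leaves $ss'+\lambda s'+\lambda' s+\lambda\lambda'\cdot 1_A=\Phi(s,\lambda)\Phi(s',\lambda')$, while $\Phi(0,1)=1_A$ and $\varepsilon_A\Phi(s,\lambda)=\lambda=\pi_2(s,\lambda)$ show $\Phi$ is an iso in $k\lAlg_{wa}$; naturality is straightforward.

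The only place where something might look subtle is the multiplication-preservation of $\Phi$, where one has to use the specific formula $b_S(s,s')=-\varepsilon_A(ss')$ so that the two occurrences of $\varepsilon_A(ss')\cdot 1_A$ cancel; this is exactly the reason the sign $-b_R(r,r')$ appears in the definition of $(R,b_R)@_{b_R}k$, and it is the only bookkeeping step in the proof. Everything else is a routine check.
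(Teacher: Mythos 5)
Your proof is correct and follows essentially the same route as the paper: $G$ is the $@$-construction with $\pi_2$ as weak augmentation, morphisms are handled via Lemma~\ref{2.2}, and the quasi-inverse is built on $K=\ker(\varepsilon_A)$ using the $k$-module decomposition $A=K\oplus k\cdot 1_A$ with multiplication $\pi_1\circ\mu|_{K\times K}$ on $K$. The one point where you genuinely do better than the paper's write-up is the sign of the recovered bilinear form: the paper, as literally written, sets $b_K=\left(\varphi_{1_A}|^{k\cdot 1_A}\right)^{-1}\circ\pi_2\circ\mu|_{K\times K}$, i.e.\ $b_K(x,y)=\varepsilon_A(xy)$, and since the second component of $(r,0)(r',0)$ in $(R,b_R)@_{b_R}k$ is $-b_R(r,r')$, that choice would return $(R,-b_R)$ on the round trip $HG$, which is not isomorphic to $(R,b_R)$ in $k\BFlAlg$ in general; your choice $b_S(s,s')=-\varepsilon_A(ss')$ is the one compatible with the stated multiplication formula, and it is precisely what makes your isomorphisms $r\mapsto(r,0)$ and $\Phi(s,\lambda)=s+\lambda 1_A$ land where they should. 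You also spell out the unit and counit isomorphisms and their naturality, which the paper compresses into ``it is now easy to check''. So: same strategy and same key lemma, but your version carries the sign bookkeeping through consistently and actually completes the verification that $G$ and $H$ are mutually quasi-inverse.
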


\begin{proof} Let $(R,b_R)$ be an object of $ k\BFlAlg$. The $k$-algebra $(R,b_R)@_{b_R}k$ is clearly a unital $k$-algebra with weak augmentation $$\pi_2\colon (R,b_{R})@_{b_R}k=R\oplus k\to k.$$ The functor $G$ associates with any object $(R,b_R)$ of $k \BFlAlg$ the object\linebreak $((R,b_{R})@_{b_R} k,\pi_2)$ of $k \lAlg_{wa}$, and with any morphism $\varphi\colon (R,b_R)\to (S,b_S)$ in $k \BFlAlg$ the mapping $\varphi\oplus\id_k\colon (R,b_{R})@_{b_R}k\to( S,b_S)@_{b_S}k$. This mapping is clearly a $k$-module morphism, but is also a morphism of $k$-algebras with identities by Lemma~\ref{2.2}. 

The quasi-inverse of  $G$ is the functor $H\colon k \lAlg_{wa}\to k \BFlAlg$ that associates with each $k$-algebra with a weak augmentation $(A,\varepsilon_A)$ the kernel $K:=\ker(\varepsilon_A)$. More precisely, from $\varphi_{1_A}\circ\varepsilon_A=\id_k$,  we get that $A=\ker(\varepsilon_A)\oplus\Imm(\varphi_{1_A})=K\oplus k\cdot 1_A$, so that there are the two canonical projections $\pi_1\colon A=K\oplus k\cdot 1_A\to K$ and $\pi_2\colon A=K\oplus k\cdot 1_A\to k\cdot 1_A$. Let $\mu\colon A\times A\to A$ be the multiplication in $A$ and $\mu|_{K\times K}\colon K\times K\to A$ be the restriction of $\mu$ to $K\times K$. The $k$-module $K$ is a $k$-algebra with respect to the multiplication \begin{equation}\pi_1\circ \mu|_{K\times K}\colon K\times K\to K.\label{*}\end{equation} Since $\varphi_{1_A}\colon k\to A$ is an injective mapping with image $k\cdot 1_A$, the corestriction $\varphi_{1_A}|^{k\cdot 1_A}\colon k\to k\cdot 1_A$, $\varphi_{1_A}|^{k\cdot 1_A}(\lambda)=\lambda 1_A$, is a $k$-algebra isomorphism. Set $b_K:=\left(\varphi_{1_A}|^{k\cdot 1_A}\right)^{-1}\circ\pi_2\circ\mu|_{K\times K}$. Then $H(A,\varepsilon_A):=(K,b_K)$ is a $k$-algebra with a bilinear form. 

As far as morphisms are concerned, let $\varphi\colon (A,\varepsilon_A)\to (B,\varepsilon_B)$ be a morphism of $k$-algebras with a weak augmentation. Then $\varepsilon_B\circ\varphi=\varepsilon_A$, so that $\varphi$ maps $$K_A:=\ker(\varepsilon_A)\qquad\mbox{\rm into}\qquad K_B:=\ker(\varepsilon_B)$$ and maps $1_A$ to $1_B$. Hence $\varphi$ induces by restriction a $k$-module morphism $$\varphi|_{K_A}^{K_B}\colon K_A\to K_B$$ and a $k$-algebra isomorphism $$k\cdot 1_A\to k\cdot 1_B, \qquad\lambda 1_A\mapsto\lambda 1_B,$$ which is the $k$-algebra isomorphism $(\varphi_{1_B}|^{k\cdot 1_B})\circ (\varphi_{1_A}|^{k\cdot 1_B})^{-1}$.
It follows that $\varphi\colon A=K_A\oplus k\cdot 1_A\to B=K_B\oplus k\cdot 1_B$ can be written as $$\varphi=\left(\varphi|_{K_A}^{K_B}\oplus\left((\varphi_{1_B}|^{k\cdot 1_B})\circ (\varphi_{1_A}|^{k\cdot 1_B})^{-1}\right)\right).$$ From Lemma~\ref{2.2}, we know that $\varphi|_{K_A}^{K_B}$ is a morphism of $k$-algebras with a bilinear form. The functor $H$ associates with any morphism $\varphi\colon (A,\varepsilon_A)\to (B,\varepsilon_B)$ of algebras with an augmentation the morphism $\varphi|_{K_A}^{K_B}$ of $k$-algebras with a bilinear form. Of course, the multiplications on ${K_A}$ and ${K_B}$ are $\pi_1\circ \mu|_{K_A\times K_A}\colon K_A\times K_A\to A=K_A\oplus k\cdot 1_A\to K_A$ and $\pi_1\circ \mu|_{K_B\times K_B}\colon K_B\times K_B\to B=K_B\oplus k\cdot 1_B\to K_B$ respectively, as in (\ref{*}). It is now easy to check that $G$ and $H$ define a category equivalence.
\end{proof}

Notice that there is a short exact sequence $0\to R\to R@ k\to k\to0$ in the category $k \lMod$. Hence $R$ is a $k$-submodule of $R@ k$.

\begin{remark}{\rm We have already remarked that the functor $(-)\#k\colon k \lAlg\to k\lAlg_1$ is  the left adjoint of the forgetful functor $k\lAlg_1\to k\lAlg$. A natural question is whether this remains true in the more general setting of the functor $(-)@k$. Now if the functor $(-)@k$ is the left adjoint of some forgetful functor between suitable categories of $k$-algebras, then the adjunction would have a unit $\eta$, that is, for every $k$-algebra $R$ with a bilinear form $b$ there would be a canonical $k$-algebra morphism $\eta_{(R,b)}\colon R\to R@_bk$. Now it is easy to check that in general there are no $k$-algebra morphisms $R\to R@_bk$, except for some very special cases, for instance:

(a) the case of $b=0$, so that $R@_bk=R\#k$, and the $k$-algebra morphism $R\to R@_bk=R\#k=R\oplus k$ is the canonical mapping $r\mapsto(r,0)$;

(b) the case in which $R$ is an augmented $k$-algebra. If $\varepsilon_R\colon R\to k$ is an augmentation (= a morphism of $k$-algebras), then there is a $k$-algebra morphism $R\to R@_bk$, $r\mapsto (0,\varepsilon_R(r))$. 

Notice that $R@_bk$ does not contains subalgebras isomorphic to $R$ in general, but it does contain a subalgebra isomorphic to $k$. 

Since there are no $k$-algebra morphisms $R\to R@_bk$ in the general case, the functor $(-)@k$ is not the left adjoint of a forgetful functor between suitable categories of $k$-algebras.}\end{remark}

Also notice that in the category $k\BFlAlg$ there is also an analogue of the conjugation of complex numbers, quaternions, etc. It is the mapping $c\colon (R,\mu,b)\to (R,-\mu,b)$, that associates to any $k$-algebra $R$ with multiplication $\mu$ and bilinear form $b$ the $k$-algebra $R$ with multiplication $-\mu$ and bilinear form $b$. Here $-\mu$ is the multiplication in which the product of $r,s\in R$ is $-rs$. 

We will see in the following that our results can be also applied to ring extensions (Section~\ref{4000}) or additive categories in which idempotent endomorphisms have kernels (Remark~\ref{4.1}).

\section{Compatibility between bilinear forms and multiplication}\label{3}

Let $A$ be a $k$-algebra. We say that a $k$-bilinear form $b\colon A\times A\to k$ is {\em compatible with the multiplication of $A$} if, for every $x,y,z,w\in A$, $xy=zw$ implies $b(x,y)=b(z,w)$. 

\begin{lemma} Let $A$ be a $k$-algebra with multiplication $\mu\colon A\times A\to A$ and $b\colon A\times A\to k$ a $k$-bilinear form. Consider the following conditions:

\noindent{\rm (a)}  $b$ is compatible with the multiplication of $A$.

\noindent{\rm (b)} $b$ factors through $\mu$, that is, there exists a $k$-module morphism $\varepsilon\colon A\to K$ such that $\varepsilon\circ\mu=b$.

Then {\rm (b)} ${}\Rightarrow{}${\rm (a)}. 

If, moreover, $A$ is unital, then {\rm (a)} ${}\Leftrightarrow{}${\rm (b)}, and if these equivalent conditions {\rm (a)} and {\rm (b)} are satisfied then the $k$-module morphism $\varepsilon$ in {\rm (b)} is unique.\end{lemma}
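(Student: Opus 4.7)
The plan is to handle the three claims in order, with the hardest (but still short) part being the construction of $\varepsilon$ in the unital case.

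\textbf{(b) $\Rightarrow$ (a).} This is immediate: if $\varepsilon\circ\mu=b$ and $xy=zw$, then $\mu(x,y)=\mu(z,w)$, hence applying $\varepsilon$ gives $b(x,y)=b(z,w)$. No hypothesis on $A$ is used.

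\textbf{(a) $\Rightarrow$ (b) when $A$ is unital.} The key observation is that, in the unital case, every element of $A$ lies in the image of $\mu$ via the canonical decomposition $a=a\cdot 1_A$. This suggests defining $\varepsilon\colon A\to k$ by $\varepsilon(a):=b(a,1_A)$. Then $\varepsilon$ is a $k$-module morphism, because $b(-,1_A)$ is $k$-linear by the bilinearity of $b$. To verify $\varepsilon\circ\mu=b$, fix $x,y\in A$; since $xy=(xy)\cdot 1_A$, the compatibility hypothesis (a) applied to the pairs $(x,y)$ and $(xy,1_A)$ yields
$$b(x,y)=b(xy,1_A)=\varepsilon(xy),$$
as desired. (One could equally well use $\varepsilon(a):=b(1_A,a)$; compatibility forces these to coincide on all products, hence everywhere.)

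\textbf{Uniqueness of $\varepsilon$.} If $\varepsilon'\colon A\to k$ is another $k$-module morphism with $\varepsilon'\circ\mu=b$, then for any $a\in A$ we have
$$\varepsilon'(a)=\varepsilon'(a\cdot 1_A)=\varepsilon'(\mu(a,1_A))=b(a,1_A)=\varepsilon(a),$$
so $\varepsilon'=\varepsilon$. Thus the factorization, when it exists, is unique.

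The only subtle point — and the reason the equivalence (a) $\Leftrightarrow$ (b) needs the unital hypothesis — is that without an identity there is no guarantee that every element of $A$ is a product, so (a) only determines a well-defined $k$-linear map on the image of $\mu$, which may not extend canonically to all of $A$. The presence of $1_A$ bypasses this obstacle by giving a distinguished product representation $a=a\cdot 1_A$ for every $a\in A$.
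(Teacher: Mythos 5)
Your proof is correct and follows essentially the same route as the paper: the same direct computation for (b) $\Rightarrow$ (a), the same definition $\varepsilon(a)=b(a,1_A)$ verified via compatibility applied to $xy=(xy)\cdot 1_A$, and uniqueness via the surjectivity of $\mu$ coming from $a=a\cdot 1_A$. Nothing further is needed.
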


\begin{proof} If $b$ factors through $\mu$, that is, $\varepsilon\circ\mu=b$ for some $k$-module morphism $\varepsilon\colon A\to K$,  $x,y,z,w\in A$ and $xy=zw$, then $$b(x,y)=(\varepsilon\circ\mu)(x,y)=\varepsilon(xy)=\varepsilon(zw)=(\varepsilon\circ\mu)(z,w)=b(z,w).$$ This shows that {\rm (b)} ${}\Rightarrow{}${\rm (a)}. 

Now assume that $A$ is unital. If $b$ is compatible with the multiplication of $A$, define $\varepsilon\colon A\to K$ setting $\varepsilon(a)=b(a,1_A)$ for every $a\in A$. Then $\varepsilon\circ\mu=b$ because, for every $a,a'\in A$, $(\varepsilon\circ\mu)(a,a')=\varepsilon(aa')=b(aa',1_A)=b(a,a')$ because $b$ is compatible with the multiplication. Hence {\rm (a)} ${}\Leftrightarrow{}${\rm (b)}.
Moreover, the mapping $\mu$ is surjective, so that $\varepsilon\circ\mu=b=\varepsilon'\circ\mu$ implies $\varepsilon=\varepsilon'$.
\end{proof}

\begin{lemma}\label{4} Let $A$ be a $k$-algebra with a $k$-bilinear form $b\colon A\times A\to k$. Consider the following two conditions:

{\rm (a)}  $b$ is compatible with the multiplication of $A$.

{\rm (b)} $b(xy,z)=b(x, yz)$ for every $x,y,z\in A$.

\noindent Then:

{\rm (1)} If $A$ is associative, then {\rm (a)}${}\Rightarrow{}${\rm (b)}.

{\rm (2)} If $A$ is unital, then {\rm (b)}${}\Rightarrow{}${\rm (a)}.\end{lemma}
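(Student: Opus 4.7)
For part (1), I would take $A$ associative and fix any $x,y,z\in A$. The key observation is that associativity gives the equality of products $(xy)\cdot z = x\cdot(yz)$, viewing each side as a product of two elements of $A$. Setting $u:=xy$, $v:=z$, $u':=x$, $v':=yz$, we have $uv=u'v'$, so condition~(a) applied to these two factorizations immediately yields $b(xy,z)=b(u,v)=b(u',v')=b(x,yz)$, which is exactly~(b).

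For part (2), I would use the identity $1_A$ as a ``neutralizing'' second factor. Given $x,y\in A$, since $y=y\cdot 1_A$, condition~(b) applied with the third variable equal to $1_A$ gives
\[
b(xy,1_A)=b(x,y\cdot 1_A)=b(x,y).
\]
Now if $xy=zw$ for some $x,y,z,w\in A$, then $b(xy,1_A)=b(zw,1_A)$ trivially, and applying the preceding formula to both sides yields $b(x,y)=b(z,w)$, establishing~(a).

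Neither direction should present any real obstacle: (1) is a one-line substitution that uses associativity only to produce two distinct factorizations of the same element, and (2) is a one-line substitution that uses the identity to recover arbitrary $b(x,y)$ from values of $b$ on pairs of the form $(\text{product},1_A)$. The only thing worth flagging is that the two halves genuinely require their respective hypotheses: without associativity there is no reason that $(xy)z$ and $x(yz)$ should agree as elements of $A$, and without an identity there is no canonical way to ``evaluate'' $b$ on a single element $xy$ so as to read off $b(x,y)$ from $b(xy,-)$.
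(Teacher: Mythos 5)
Your proof is correct and follows essentially the same route as the paper: part (1) uses $(xy)z=x(yz)$ as two factorizations of one element and applies compatibility, and part (2) is exactly the paper's chain $b(x,y)=b(x,y\cdot 1_A)=b(xy,1_A)=b(zw,1_A)=b(z,w)$, merely organized by first isolating the identity $b(xy,1_A)=b(x,y)$. No gaps.
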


\begin{proof} (1) If $A$ is associative and  $x,y,z\in A$, then $(xy)z=x(yz)$, so that (b) holds by (a).

{\rm (2)} If $A$ has an identity $1_A$, say, $x,y,z,w\in A$ and  $xy=zw$, then $b(x,y)=b(x,y\cdot 1_A)=b(xy,1_A)=b(zw,1_A)=b(z,w\cdot 1_A)=b(z,w)$.\end{proof}

In the following, for a commutative unital  ring $k$ and a $k$-module $A$, we will write 
$A^{\otimes n} $ for the $n$-fold tensor product of $A$ over $k$ and $$T(A):=\bigoplus_{n\ge  0} A^{\otimes n} =k\oplus A\oplus (A\otimes A)\oplus (A\otimes A\otimes A)\oplus\dots$$ to denote the tensor algebra.

\begin{theorem}\label{associative} The following conditions are equivalent for a $k$-algebra $(R,b)$ with multiplication $\mu_R\colon R\times R\to R$ and a bilinear form $b$:

{\rm (a)}  The $k$-algebra $(R,b)@_bk$ is associative.

{\rm (b)}  $b(xy,z)=b(x, yz)$ and
$(xy)z-b(x,y)z=x(yz)-xb(y,z)$  for every $x,y,z\in R$.

{\rm (c)}  The $k$-algebra $(R,b)@_bk$ is isomorphic to the tensor algebra $T(R)$ modulo the two-sided ideal $I$ of $T(R)$ generated by the set of all the elements $b(x,y)-\mu_R(x,y)+x\otimes y$, where $x,y\in R$.\end{theorem}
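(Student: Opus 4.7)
The plan is to prove (a)$\Leftrightarrow$(b) by a direct associativity computation in $(R,b)@_b k$, and then to establish (a)$\Leftrightarrow$(c) via the universal property of the tensor algebra.

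For (a)$\Leftrightarrow$(b), I would expand both sides of
\[
((x,\lambda)(y,\mu))(z,\nu)=(x,\lambda)((y,\mu)(z,\nu))
\]
using the product formula (\ref{E}). Most of the mixed terms involving $\lambda,\mu,\nu$ cancel automatically because $\mu_R$ and $b$ are $k$-bilinear and $k$ is commutative; what remains on the $R$-component is the identity $(xy)z-b(x,y)z=x(yz)-b(y,z)x$ (which is the second identity in (b), using centrality of $k$), and on the $k$-component the identity $b(xy,z)=b(x,yz)$. Setting $\lambda=\mu=\nu=0$ shows (a)$\Rightarrow$(b), and the same cancellation argument, now for arbitrary $\lambda,\mu,\nu$, gives (b)$\Rightarrow$(a).

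For (c)$\Rightarrow$(a), observe that $T(R)/I$ is associative as a quotient of the associative algebra $T(R)$, so any $k$-algebra isomorphic to it is likewise associative. For (a)$\Rightarrow$(c), I invoke the universal property of the tensor algebra: the $k$-module map $\iota\colon R\to (R,b)@_b k$, $r\mapsto (r,0)$, extends uniquely to a unital $k$-algebra morphism $\Phi\colon T(R)\to (R,b)@_b k$, where assumption (a) is essential because this universal property applies only to associative targets. A direct calculation gives
\[
\Phi\bigl(b(x,y)-\mu_R(x,y)+x\otimes y\bigr)=(0,b(x,y))-(xy,0)+(xy,-b(x,y))=0,
\]
so $\Phi$ vanishes on the generators of $I$ and descends to a surjective morphism $\overline\Phi\colon T(R)/I\to (R,b)@_b k$, the image containing both $R$ (via $\iota$) and $k\cdot 1$ (via the unit).

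To exhibit an inverse, define the $k$-linear map $\Psi\colon (R,b)@_b k\to T(R)/I$ by $\Psi(r,\lambda)=[r]+\lambda[1]$, with $[-]$ the class modulo $I$. Using the defining relation $[x\otimes y]=[xy]-b(x,y)[1]$ in $T(R)/I$, one checks
\[
\Psi(r,\lambda)\Psi(s,\mu)=[r\otimes s]+\lambda[s]+\mu[r]+\lambda\mu[1]=[rs]+\lambda[s]+\mu[r]+(\lambda\mu-b(r,s))[1]=\Psi\bigl((r,\lambda)(s,\mu)\bigr),
\]
so $\Psi$ is a $k$-algebra morphism. Finally $\overline\Phi\circ\Psi$ is the identity on the generators $(r,0)$ and $(0,1)$ of $(R,b)@_b k$, and $\Psi\circ\overline\Phi$ is the identity on the classes $[r]$ for $r\in R$ and $[1]$, which generate $T(R)/I$ as a $k$-algebra, so $\overline\Phi$ is an isomorphism. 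The main obstacle is the careful bookkeeping in the (a)$\Leftrightarrow$(b) expansion, together with recognising that (a) must be used exactly at the point of invoking the universal property of $T(R)$; without associativity of the target, $\Phi$ itself would not be well-defined.
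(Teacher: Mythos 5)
Your argument is correct, and its overall skeleton coincides with the paper's: (a)$\Leftrightarrow$(b) by expanding $((x,\lambda)(y,\mu))(z,\nu)$ and $(x,\lambda)((y,\mu)(z,\nu))$ and reading off the two identities, (c)$\Rightarrow$(a) because a quotient of the associative algebra $T(R)$ is associative, and (a)$\Rightarrow$(c) by extending $r\mapsto(r,0)$ to a unital morphism $T(R)\to(R,b)@_bk$ via the universal property (which, as you rightly stress, is exactly where associativity of the target is used) and checking that the generators $b(x,y)-\mu_R(x,y)+x\otimes y$ are killed. The only point where you genuinely diverge is the final step showing that the induced surjection $\overline\Phi\colon T(R)/I\to(R,b)@_bk$ is bijective: the paper argues by a normal-form reduction, namely that every element of $T(R)$ is congruent modulo $I$ to one of degree $\le 1$ (induction on degree), and that a degree $\le1$ element $\lambda+x$ maps to zero only if $\lambda=0$ and $x=0$; you instead construct the explicit $k$-linear map $\Psi(r,\lambda)=[r]+\lambda[1]$, verify it is an algebra morphism using the relation $[x\otimes y]=[xy]-b(x,y)[1]$, and check the two composites on generators. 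Both routes are sound; your explicit inverse avoids the induction on degree altogether, while the paper's reduction argument additionally yields a normal form (degree $\le 1$ representatives) for the elements of $T(R)/I$, which is mildly more informative. Your computations, including $\Phi(b(x,y)-\mu_R(x,y)+x\otimes y)=0$ and the verification that $\Psi$ respects the product, are accurate.
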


\begin{proof} {\rm (a)}${}\Leftrightarrow{}${\rm (b)} Computing $((x,\lambda)(y,\mu))(z,\nu)$ and $(x,\lambda)((y,\mu)(z,\nu))$ one sees that they are equal if and only if the two equalities in (b) hold.

{\rm (c)}${}\Rightarrow{}${\rm (a)} The tensor algebra $T(R)$ is associative for every $k$-module $R$, so that if $(R,b)@_bk$ is a homomorphic image of $T(R)$, then $(R,b)@_bk$ is associative.

{\rm (a)}${}\Rightarrow{}${\rm (c)} Tensor algebra has the universal property with respect to associative unital algebras, so that if the $k$-algebra $(R,b)@_bk$ is associative, the $k$-module morphism $\varepsilon_1\colon R\to (R,b)@_bk=R\oplus k$ can be uniquely extended to a unital $k$-algebra morphism $T(R)\to (R,b)@_bk$. This $k$-algebra morphism maps the elements of $T(R)$ of the form $\lambda+x$ of $T(R)$ to the elements $(\lambda,x)$ of 
$(R,b)@_bk$, and the elements of $T(R)$ of the form $b(x,y)-\mu_R(x,y)+x\otimes y$ to $$\begin{array}{l}b(x,y)-\mu_R(x,y)+\mu_{R@k}(x, y)=(0,b(x,y))+(-xy,0)+(x,0)(y,0)= \\ \qquad=(0,b(x,y))+(-xy,0)+(xy,-b(x,y))=0.\end{array}$$  Hence there is an induced surjective  $k$-algebra morphism $T(R)/I\to (R,b)@_bk$. Clearly, every element of the graded $k$-algebra $T(R)$ is congruent modulo $I$ to an element of degree $\le 1$, that is, to an element of $T(R)$ of the form $\lambda+x$ (this can be seen easily by induction on the degree of the element of $T(R)$). Such an element is mapped to the zero of $(R,b)@_bk$ if and only if $\lambda=0$ and $x=0$. Hence the morphism $T(R)/I\to (R,b)@_bk$ is an isomorphism.
\end{proof}

\begin{remarks} {\rm (a) It is easy to check that $(R,b)@_bk$ is commutative if and only if $R$ is commutative and $b$ is symmetric, that is, $b(x,y)=b(y,x)$ for all $x,y\in R$.

(b) If the $k$-algebra $(R,b)@_bk$ is associative, it is not necessarily true that $R$ is associative. Cf.~Theorem~\ref{associative}. For instance, in Example~\ref{9} (b) (quaternions), the three-dimensional real $\R$-vector space $R$ of Physics with the cross product $\times$ is not associative, but the algebra $(R,b_R)@_{b_R}\R$ of quaternions is an associative division algebra.}\end{remarks}

The representation $T(R)/I\cong( R,b)@_bk$ of Theorem~\ref{associative}(c), can be extended to non-associative $k$-algebras. To this end, we must replace the tensor algebra $T(A)$, which is associative, with its non-associative version. In order to present it, we need a digression.

Recall that the free non-associative algebra on a set $X$ over a commutative unital ring $k$ is the free $k$-module with free set of generators consisting of all non-associative monomials (formal products of elements of $X$ retaining parentheses). The product of monomials $u,v$ is just $(u)(v)$. This algebra is unital (take the empty product as a monomial). 

We are mainly interested in the case in which $X$ has only one element $x$. In this case, the free non-associative $k$-algebra on the indeterminate $x$ is the free $k$-module with free set of generators of all non-associative monomials in $x$. Similarly, we have the {\em free cyclic unital magma} $S$: it is the set of all non-associative monomials in an indeterminate $x$, so that \begin{multline}S:=\{1, x, x\cdot x, x\cdot (x\cdot x), (x\cdot x)\cdot x, 
(x\cdot x)\cdot( x\cdot x), x\cdot (x\cdot (x\cdot x)), \\ x\cdot ((x\cdot x)\cdot x), (x\cdot (x\cdot x))\cdot x, (( x\cdot x)\cdot x)\cdot x, \dots\}.\label{xxx}\end{multline} The free non-associative $k$-algebra on the indeterminate $x$ is the free $k$-module with $S$ as a free set of generators. Any element $s$ of $S$ has a {\em degree} $d(s)$, which is a non-negative integer. In describing the (``first'') elements of $S$ in (\ref{xxx}) we have written the unique element of $S$ of degree $0$, the unique element of $S$ of degree $1$, the unique element of $S$ of degree $2$, the $2$ elements of $S$ of degree $3$, and the $5$ elements of $S$ of degree $4$. 

The {\em non-associative tensor algebra} of a $k$-module $R$, where $k$ is a commutative unital ring, is $$T_{na}(R):=\bigoplus_{s\in S}R^{\otimes d(s)} .$$ Notice how this extends the associative case. In the associative case we have that $$T(R):=\bigoplus_{n\in \NN}R^{\otimes n} .$$ Here $\NN$ is the free cyclic monoid, i.e.~the associative analogue of the free cyclic unital magma $S$. In $T_{na}(R)$, we have one copy of $R^{\otimes n} $ ($n$ times) for each element $s\in S$ of degree $n$, i.e., for each way of writing the parentheses in a monomial in $x$ of degree $n$.

The non-associative $k$-algebra $T_{na}(R)$ ($k$ a commutative unital ring, $R$ a $k$-module) has the universal property for our non-associative unital $k$-algebras: Any $k$-module morphism $f\colon R\to A$ from $R$ to 
a unital $k$-algebra $A $ can be uniquely extended to a morphism of unital algebras from $T_{na}(R)$ to $A$. The same proof as Theorem~\ref{associative}((a)${}\Rightarrow{}$(c)) shows that:

\begin{proposition}\label{na} Let $(R,b)$ be a $k$-algebra with a bilinear form $b$. Then $$(R,b)@_bk\cong T_{na}(R)/I,$$ where $I$ is the two-sided ideal of $T_{na}(R)$ generated by the set of all the elements $b(x,y)-\mu_R(x,y)+x\otimes y$ ($x,y\in R$).\end{proposition}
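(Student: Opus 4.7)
The plan is to imitate the proof of Theorem~\ref{associative}, (a)$\Rightarrow$(c), replacing the associative tensor algebra $T(R)$ and its universal property by the non-associative analogues $T_{na}(R)$ and its universal property recorded just before the statement. First I construct a $k$-algebra morphism $\Phi\colon T_{na}(R)\to (R,b)@_b k$ by applying the universal property to the $k$-module morphism $\varepsilon_1\colon R\to (R,b)@_b k$, $r\mapsto (r,0)$, so that $\Phi$ is the unique morphism of unital $k$-algebras extending $\varepsilon_1$. I then check that each generator of $I$ lies in $\ker\Phi$: by direct calculation, using $\Phi(1)=(0,1_k)$, $\Phi(x\otimes y)=(x,0)(y,0)=(xy,-b(x,y))$ and $\Phi(\mu_R(x,y))=(xy,0)$, one obtains
\[
\Phi(b(x,y)-\mu_R(x,y)+x\otimes y)=(0,b(x,y))-(xy,0)+(xy,-b(x,y))=(0,0).
\]
Hence $I\subseteq\ker\Phi$ and $\Phi$ factors to a surjective $k$-algebra morphism $\overline\Phi\colon T_{na}(R)/I\to (R,b)@_b k$; surjectivity is immediate because $(r,\lambda)$ is the image of the class of $\lambda+r\in k\oplus R\subseteq T_{na}(R)$.

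The injectivity is the heart of the argument. I would first show, by induction on the degree $d(s)$ of the non-associative monomials $s\in S$, that every element of $T_{na}(R)$ is congruent modulo $I$ to an element of degree $\le 1$, i.e.\ to an element of the form $\lambda+r$ with $\lambda\in k$ and $r\in R$. For the inductive step, any monomial component of degree $n\ge 2$ can be written as an outermost product $u\otimes v$ where $u$, $v$ are non-associative monomials of strictly smaller degree; by induction $u\equiv \alpha+u_1$ and $v\equiv \beta+v_1\pmod I$ with $\alpha,\beta\in k$ and $u_1,v_1\in R$, and bilinearity of the product in $T_{na}(R)$ gives
\[
u\otimes v\equiv \alpha\beta+\alpha v_1+\beta u_1+u_1\otimes v_1\pmod{I}.
\]
The three initial summands already have degree $\le 1$, and the generator $b(u_1,v_1)-\mu_R(u_1,v_1)+u_1\otimes v_1\in I$ lets me rewrite $u_1\otimes v_1$ as the degree-$\le 1$ element $\mu_R(u_1,v_1)-b(u_1,v_1)\in R\oplus k$. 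Thus every class in $T_{na}(R)/I$ has a representative in $k\oplus R$. Finally, such a representative $\lambda+r$ maps under $\overline\Phi$ to $(r,\lambda)$, which is zero in $(R,b)@_b k$ if and only if $r=0$ and $\lambda=0$; this proves injectivity.

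The only point that is not a direct copy of the associative proof is the reduction step, and the main (mild) obstacle is to verify that an induction on the degree of non-associative monomials goes through despite the proliferation of bracketings in $S$. This is handled by the observation above that any monomial of degree $\ge 2$ decomposes as an outermost product $u\otimes v$, so the generators of $I$ suffice to collapse arbitrary bracketings one layer at a time; everything else (the computation showing $I\subseteq\ker\Phi$, surjectivity, and the final injectivity argument) is essentially formal and identical to the associative case treated in Theorem~\ref{associative}.
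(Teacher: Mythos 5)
Your proposal is correct and follows essentially the same route as the paper, which simply asserts that the proof of Theorem~\ref{associative}((a)$\Rightarrow$(c)) carries over verbatim: universal property of $T_{na}(R)$ applied to $\varepsilon_1$, vanishing on the generators of $I$, reduction modulo $I$ to elements of degree $\le 1$, and the evident injectivity on such representatives. Your explicit induction on the degree of non-associative monomials via the outermost product decomposition is exactly the detail the paper leaves implicit, and it is handled correctly.
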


Let $k\BFlAlg_c$ be the full subcategory of $k\BFlAlg_1$ whose objects are all unital $k$-algebras $(A,b_A)$ whose bilinear form $b_A$ is compatible. 

\begin{theorem}\label{3'} Let $k$ be a commutative unital ring. There is a category isomorphism $M\colon k\lAlg_{wa}\to k\BFlAlg_c$ of the category $k\lAlg_{wa}$ of $k$-algebras with a weak augmentation and the category $k\BFlAlg_c$ of unital $k$-algebras with a compatible bilinear form. This functor $M$ associates with any $k$-algebra $(A,\varepsilon_A)$ with a weak augmentation the $k$-algebra $(A,b_A)$ with compatible bilinear form $b_A:=\varepsilon_A\circ \mu_A$. Here $\mu_A\colon A\times A\to A$ is the multiplication of the algebra $A$. The quasi-inverse functor of $M$ associates with any $k$-algebra $(A,b_A)$ with identity $1_A$ and a compatible bilinear form $b_A$ the algebra with weak augmentation $(A,\varepsilon_A)$, where $\varepsilon_A\colon A\to k$ is defined by $\varepsilon_A(a)=b_A(a,1_A)$ for every $a\in A$.\end{theorem}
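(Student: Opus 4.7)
The plan is to exhibit $M$ together with its inverse $N$, both acting as the identity on underlying $k$-modules and (unital) $k$-algebra structures; this is why we obtain an honest isomorphism of categories rather than merely an equivalence, and why the bulk of the argument reduces to checking that the extra structure (weak augmentation vs.\ compatible bilinear form) is transported correctly in both directions and that $MN$ and $NM$ coincide with the identity functors on the nose.

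First I would verify that $M$ is well-defined. Given $(A,\varepsilon_A)\in k\lAlg_{wa}$, set $b_A:=\varepsilon_A\circ\mu_A$. Bilinearity follows from bilinearity of $\mu_A$ composed with $k$-linearity of $\varepsilon_A$, and compatibility is immediate: if $xy=zw$ then $\mu_A(x,y)=\mu_A(z,w)$, hence $b_A(x,y)=b_A(z,w)$. For a morphism $f\colon(A,\varepsilon_A)\to(B,\varepsilon_B)$ in $k\lAlg_{wa}$, the computation $b_B(f(x),f(y))=\varepsilon_B(f(x)f(y))=\varepsilon_B(f(xy))=\varepsilon_A(xy)=b_A(x,y)$ together with $f(1_A)=1_B$ shows that $f$ is a morphism in $k\BFlAlg_c$. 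Functoriality is automatic since $M$ does not alter the underlying morphism.

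Next I would construct $N$. Given $(A,b_A)\in k\BFlAlg_c$, define $\varepsilon_A(a):=b_A(a,1_A)$; this is $k$-linear by the bilinearity of $b_A$. The one genuinely delicate point is that one needs $\varepsilon_A(1_A)=b_A(1_A,1_A)=1_k$ in order for $\varepsilon_A$ to be a weak augmentation; this normalization should be understood as part of the data of $k\BFlAlg_c$ (it holds automatically for every object in the image of $M$, since there $\varepsilon_A(1_A)=1$). On morphisms, given $f\colon(A,b_A)\to(B,b_B)$ in $k\BFlAlg_c$, the chain $\varepsilon_B(f(a))=b_B(f(a),1_B)=b_B(f(a),f(1_A))=b_A(a,1_A)=\varepsilon_A(a)$ uses that $f$ is an orthogonal transformation preserving identities and so gives the compatibility $\varepsilon_B\circ f=\varepsilon_A$ required for a morphism in $k\lAlg_{wa}$.

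Finally I would show that $M$ and $N$ are mutually inverse, which is where compatibility plays its essential role. For $N\circ M=\id$: starting from $(A,\varepsilon_A)$, $M$ yields $b_A=\varepsilon_A\circ\mu_A$, and $N$ then yields $\varepsilon_A'(a)=b_A(a,1_A)=\varepsilon_A(a\cdot 1_A)=\varepsilon_A(a)$. For $M\circ N=\id$: starting from $(A,b_A)$, $N$ yields $\varepsilon_A(a)=b_A(a,1_A)$, and $M$ then yields $b_A'(x,y)=\varepsilon_A(xy)=b_A(xy,1_A)$; the required identity $b_A(xy,1_A)=b_A(x,y)$ is exactly the compatibility axiom applied to $x\cdot y=(xy)\cdot 1_A$, reproducing the key step already used in the proof of the first lemma of Section~\ref{3}. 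Since both functors are the identity on morphisms, the compositions are strictly identity functors, completing the isomorphism. The main point of care throughout is the identity-preservation condition $b_A(1_A,1_A)=1$; everything else is a routine unwinding of definitions driven by compatibility.
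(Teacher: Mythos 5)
Your proof is correct and follows essentially the same route as the paper: the same two assignments $b_A=\varepsilon_A\circ\mu_A$ and $\varepsilon_A(a)=b_A(a,1_A)$, the same mutual-inverse computations using $a\cdot 1_A=a$ and compatibility applied to $xy=(xy)\cdot 1_A$, and the same verification that a unital morphism respects the weak augmentations if and only if it is an orthogonal transformation. The one place you go beyond the paper is the normalization $b_A(1_A,1_A)=1_k$: you are right that without it $\varepsilon_A(a)=b_A(a,1_A)$ need not be a left inverse of $\varphi_{1_A}$ (the zero form is compatible but gives $\varepsilon_A=0$), a point the paper's proof passes over silently, so reading this normalization into the objects of $k\BFlAlg_c$, as you do, is the right fix.
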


\begin{proof} We first show that the two assignations are mutually inverse on objects. Let $(A,\mu_A,1_A)$ be a unital $k$-algebra.

If on $A$ there is a weak augmentation $\varepsilon_A$, then the associated bilinear form is $b_A=\varepsilon_A\circ\mu_A$. With this bilinear form, the associated weak augmentation is $\varepsilon'_A(a)=b_A(a,1_A)=\varepsilon_A(a\cdot 1_A)=\varepsilon_A(a)$ for every $a\in A$, so that $\varepsilon_A=\varepsilon_A'$.

Conversely, if $A$ has a compatible bilinear form $b_A$, then the weak augmentation associated with it is $\varepsilon_A(a)=b_A(a,1_A)$, and the bilinear form associated with this weak augmentation is $b'_A=\varepsilon_A\circ\mu_A$, so that $$b'_A(x,y)=\varepsilon_A(xy)=b_A(xy,1_A)=b_A(x,y).$$ This shows that the two assignations define a canonical one-to-one correspondence between weak augmentations on $A$ and compatible bilinear forms on $A$.

As far as $k$-linear mappings are concerned, we must show that, for any two $k$-algebras $(A,\varepsilon_A),(B,\varepsilon_B)$ with weak augmentations, a morphism $\varphi\colon A\to B$ of unital $k$-algebras respects the augmentations if and only if it respects the associated compatible bilinear forms $b_A=\varepsilon_A\circ\mu_A$ and $b_B=\varepsilon_B\circ\mu_B$. Now if $\varphi$ respects the augmentations, that is, if $\varepsilon_B\varphi=\varepsilon_A$, then $\varphi$ respects the associated compatible bilinear forms, because $$b_A=\varepsilon_A\circ\mu_A=\varepsilon_B\circ\varphi\circ\mu_A=\varepsilon_B\circ\mu_B\circ(\varphi\times\varphi)=b_B\circ(\varphi\times\varphi).$$ Conversely, if $\varphi$ respects the associated compatible bilinear forms, i.e., $$b_A=b_B\circ(\varphi\times\varphi),$$ evaluating the equality on the elements $(a, 1_A)$ we get that $$b_A(a, 1_A)=(b_B\circ(\varphi\times\varphi))(a, 1_A)=b_B(\varphi(a),1_B),$$ that is, $\varepsilon_A(a)=\varepsilon_B(\varphi(a))$, as desired.
\end{proof}

Combining the results of Theorems~\ref{2} and \ref{3'}, we see that the functor $$(-)@k\colon k\BFlAlg\to k\BFlAlg_1$$ associates with any $k$-algebra $(R,b_R)$ with a bilinear form the unital $k$-algebra $(R,b_R)@_{b_R}k$, whose bilinear form is $\pi_2\circ\mu_{R@k}$, that is, the orthogonal direct sum $(-b_R)\boxplus 1_k$. This sign minus in front of $b_R$ is a little annoying, but it is necessary in view of our classical Examples~\ref{9}.

\bigskip

\begin{proposition} For any $k$-algebra $A$ with identity $1_A$, there is a one-to-one correspondence between the set of all idempotent $k$-module endomorphisms of $A$ with image $k\cdot 1_A$ and the set of all  weak augmentations of $A$.
\end{proposition}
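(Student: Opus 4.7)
The plan is to mimic the proof of the analogous (unwritten) result for augmentations and idempotent $k$-algebra endomorphisms stated earlier, but carried out entirely in the category $k \lMod$: I will exhibit explicit assignments in both directions and check they are mutually inverse. In one direction, a weak augmentation $\varepsilon_A\colon A\to k$ is sent to the composite $e_{\varepsilon_A} := \varphi_{1_A}\circ \varepsilon_A\colon A\to A$. In the other, an idempotent $k$-module endomorphism $e$ of $A$ with image $k\cdot 1_A$ is sent to $\varepsilon_e := (\varphi_{1_A}|^{k\cdot 1_A})^{-1}\circ e$, where $\varphi_{1_A}|^{k\cdot 1_A}\colon k\to k\cdot 1_A$ is the corestriction of $\varphi_{1_A}$ to its image and is a $k$-module isomorphism because $\varphi_{1_A}$ is a split monomorphism (via any weak augmentation).

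For the forward assignment, idempotency of $e_{\varepsilon_A}$ is immediate from $\varepsilon_A\circ\varphi_{1_A}=\id_k$, since $e_{\varepsilon_A}\circ e_{\varepsilon_A}=\varphi_{1_A}\circ(\varepsilon_A\circ\varphi_{1_A})\circ\varepsilon_A=\varphi_{1_A}\circ\varepsilon_A=e_{\varepsilon_A}$; moreover $\Imm(e_{\varepsilon_A})\subseteq \Imm(\varphi_{1_A})=k\cdot 1_A$, with equality because $\varepsilon_A$ is surjective (it has $\varphi_{1_A}$ as a right inverse). For the reverse assignment, the standard fact that an idempotent acts as the identity on its image gives $e(\lambda\cdot 1_A)=\lambda\cdot 1_A$ for every $\lambda\in k$; hence $\varepsilon_e\circ\varphi_{1_A}(\lambda)=(\varphi_{1_A}|^{k\cdot 1_A})^{-1}(\lambda\cdot 1_A)=\lambda$, so $\varepsilon_e$ is indeed a weak augmentation.

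Finally, the two assignments are mutually inverse by direct computation: starting from $\varepsilon_A$, for every $a\in A$ one has $e_{\varepsilon_A}(a)=\varepsilon_A(a)\cdot 1_A$, so $\varepsilon_{e_{\varepsilon_A}}(a)=(\varphi_{1_A}|^{k\cdot 1_A})^{-1}(\varepsilon_A(a)\cdot 1_A)=\varepsilon_A(a)$; starting from an idempotent $e$ with image $k\cdot 1_A$, the composite $e_{\varepsilon_e}=\varphi_{1_A}\circ(\varphi_{1_A}|^{k\cdot 1_A})^{-1}\circ e$ equals $e$ since the inclusion $\varphi_{1_A}$ composed with its own corestriction-inverse is the identity on $k\cdot 1_A$, and $e$ takes values there. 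I expect no real obstacle: the only conceptual point is that, compared with the analogous algebra-level statement, neither $\varepsilon_A$ nor $e$ is required to respect multiplication, so nothing forces us to leave $k \lMod$, and the argument is purely linear.
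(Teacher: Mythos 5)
Your two assignments ($\varepsilon_A\mapsto\varphi_{1_A}\circ\varepsilon_A$ and $e\mapsto(\varphi_{1_A}|^{k\cdot 1_A})^{-1}\circ e$) are exactly the natural ones, and indeed the paper offers no separate argument for this proposition: it is stated as the $k\lMod$-analogue of the earlier correspondence for augmentations, for which the paper only records that $\varphi_{1_A}\circ\varepsilon_A$ is an idempotent endomorphism. Your forward direction, the computation $e(\lambda 1_A)=\lambda 1_A$ from idempotency, and the two mutual-inverse checks are all correct and purely linear, as you say.

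There is, however, one point where your writeup is circular, and it is precisely the delicate point of the statement. In the reverse direction you invert the corestriction $\varphi_{1_A}|^{k\cdot 1_A}\colon k\to k\cdot 1_A$, justifying its bijectivity ``because $\varphi_{1_A}$ is a split monomorphism (via any weak augmentation)''. But at that stage you are given only the idempotent $e$ with image $k\cdot 1_A$, not a weak augmentation; the existence of such an idempotent does \emph{not} force $\varphi_{1_A}$ to be injective. For instance, take $k=\mathbb{Z}$ and $A=\mathbb{Z}/2\mathbb{Z}$: then $e=\id_A$ is an idempotent $\mathbb{Z}$-module endomorphism with image $k\cdot 1_A=A$, while no weak augmentation $A\to\mathbb{Z}$ exists, since $\varphi_{1_A}$ is not injective. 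So your reverse map is undefined there, and in fact the two sets have different cardinalities, which shows that the correspondence itself requires the (implicit, and everywhere present in the paper, where $A=S\oplus k$) hypothesis that $\varphi_{1_A}$ is injective, i.e.\ that $k\cdot 1_A\cong k$ as a $k$-module. You should either state this hypothesis explicitly before defining $\varepsilon_e$, or restrict the claimed bijection accordingly; with that adjustment your argument is complete.
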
 

\begin{example}\label{Galois}{\rm The most important classical example of weak augmentation is the trace of a field extension. Let $L/K$ be a finite field extension for which the characteristic of $K$ does not divide the degree $[L:K]$ of the extension. Recall that the {\em trace} $\Tr_{L/K}(\alpha)$ of an element $\alpha\in L$ is the trace, in the sense of linear algebra, of multiplication by $\alpha$ viewed as a $K$-vector space endomorphism of the $K$-vector space $L$. When $L/K$ is a Galois extension, the trace of $\alpha$ is the sum of all the elements $\sigma(\alpha)\in L$ where $\sigma$ ranges in the Galois group of $L/K$. If $\lambda\in K$, one has that $\Tr_{L/K}(\lambda)=[L:K]\lambda$. Therefore
the mapping $\varepsilon\colon L\to K$ defined by $\varepsilon(\alpha):=[L:K]^{-1}\Tr_{L/K}(\alpha)$ for every $\alpha\in L$, is a weak augmentation of the $K$-algebra $L$. 

Let $Z$ be the set of all the elements of $L$ of trace $0$, so that $L=Z\oplus K$ as\linebreak $K$-vector spaces. Define the product $\mu_Z$ on $Z$ defining $\mu_Z(\alpha,\beta):=\alpha\beta-\varepsilon(\alpha\beta)$ for every $\alpha,\beta\in Z$, and define the $K$-bilinear form $b_Z$ on $Z$ setting $b_Z(\alpha,\beta)=-\varepsilon(\alpha\beta)$. The $K$-algebra $(Z,\mu_Z,b_Z)$ with bilinear form $b_Z$ is the $K$-algebra with bilinear form that produces the extension $(Z,b_Z)@_{b_Z}K\cong L$, which is a $K$-algebra with weak augmentation $\varepsilon\colon L\to K$. In the notation of Theorem~\ref{3'}, we have that the functor $M$ associates with the $K$-algebra $(Z,\varepsilon|_Z)$ with weak augmentation the restriction $\varepsilon|_Z\colon Z\to K$ of $\varepsilon$ to $Z$, the $K$-algebra $M(Z,\varepsilon|_Z)=L$ with bilinear form $$b':=(-b_Z)\boxplus 1_K\colon L\times L\to K.$$ Notice that multiplication on $L$ and the bilinear form $b'$ are compatible because $b'(\alpha, \beta)=\varepsilon(\alpha\beta)$.

In this example, it is easy to show that the equality $$b_Z(\mu_Z(x,y),z)=b_Z(x,\mu_Z(y,z))$$ of Lemma~\ref{4}(b) holds for every $x,y,z\in Z$. This can be either checked directly, or using Theorem~\ref{associative} and the fact that the multiplication on $L$ is associative. In the next paragraph we will see that the multiplication $\mu_Z$ on $Z$ and the bilinear form $b_Z$ are not compatible in general. Hence the implication (b)$\Rightarrow$(a) of Lemma~\ref{4}(b) does not hold for the $K$-algebra without identity $(Z,\mu_Z)$.
The multiplication $\mu_Z$ on $Z$ is commutative, but not associative in general.

This example of a finite field extension $L/K$ is a generalization of Example~\ref{9}(a) of complex numbers, where the trace $\Tr_{\C/\R}(\alpha)$ of a complex number $\alpha$ is $2$Re$(\alpha)$ (twice the real part of $\alpha$). Hence in our Example~\ref{9}(a) the (zero) multiplication on $Z=i\R$ and the bilinear form $b_Z$ on $Z$ are not compatible.  }\end{example}
\section{Ring extensions}\label{4000}

We now generalize the ideas of the previous sections to study ring extensions $R\subseteq S$. Now $R$ and $S$ are associative ring with identity $1_R=1_S$. Then $S$ is naturally an $R$-$R$-bimodule $_RS_R$, and the inclusion of $R$ into $_RS_R$ is an $R$-$R$-bimodule monomorphism. Let us call {\em augmentation} of the ring extension $R\subseteq S$ any ring morphism $\varepsilon_S\colon S\to {R}$ that is the identity on $R$, and
{\em weak augmentation} of the ring extension $R\subseteq S$ any $R$-$R$-bimodule morphism $\varepsilon_S\colon {}_RS_R\to {}{}_RR_R$ that is the identity on $R$. It is easily checked that every augmentation is a weak augmentation.

There is an overlapping between what we are doing here and \cite{PJ, P}. For instance weak augmentations are considered in \cite[p.~50]{PJ}, under the name of ``right splitting extension''. The only difference with us is that we deal with $R$-$R$-bimodule morphisms, and \cite{PJ} deals with right $R$-module morphisms.

\bigskip

If $R\subseteq S$ are associative rings and $R$ has an identity $1_R$, let $C_S(R)$ be the centralizer of $R$ in $S$. Then $C_S(R)$ is a subring of $S$, and there is a one-to-one correspondence between the set $\Hom_{\Rng} (R,S)$ of all morphisms of $R$ into $S$ in the category $\Rng$ of rings possibly non-unital and the set of all idempotent elements of $C_S(R)$. For any idempotent element $e$ of  $C_S(R)$ the corresponding morphism $\varphi_e\colon R\to S$ is defined by $\varphi_e(r)=e r$ for every $r \in R$. Conversely, for any morphism $\varphi\colon R\to S$ the corresponding idempotent element of $C_S(R)$ is $\varphi(1)$.

It is easy to check that $\varphi_e\colon R\to S$, $\varphi_e\colon r\mapsto e r$, is a right $R$-module morphism for every $e\in S$. For an idempotent $e\in S$, $\varphi_e$ is a ring morphism if and only if it is an $R$-$R$-bimodule morphism, if and only if $e\in C_S(R)$.

If $\varepsilon\colon S\to R$ is an augmentation, then $S=K\oplus R$, where $K:=\ker\varepsilon$ is a two-sided ideal of $S$, and $(k,r)(k',r')=(kr'+rk'+kk', rr')$.

In general, given any unital ring $R$ and any $R$-$R$-bimodule $_RK_R$ that is also a ring (without identity) with respect to a multiplication $\mu_K\colon K\times K\to K$, which is an $R$-balanced mapping (equivalently, there is an $R$-$R$-bimodule morphism $\mu_K\colon {}_RK\otimes_R K_R\to {}_RK_R$), then it is possible to give $K\oplus R$ a suitable ring structure (for the associativity of the multiplication on this new ring $K\oplus R$, some suitable axioms are necessary). This construction is called a {\em trivial extension} in \cite[p.~223]{P} for $K$ abelian, and a {\em semi-trivial extension} when a bilinear form is involved.

More precisely, let $R$ be any unital ring, let $_RK_R$ be an $R$-$R$-bimodule  that is also a ring (without identity) with respect to a multiplication $\mu_K\colon K\times K\to K$, which is an $R$-balanced mapping (equivalently, there is an $R$-$R$-bimodule morphism $\mu_K\colon {}_RK\otimes_R K_R\to {}_RK_R$), and suppose that there is a ``bilinear form $b_K$'', that is, an $R$-$R$-bimodule morphism $b_K\colon {}_RK\otimes_R K_R\to {}_RR_R$. Then it is possible to give $K\oplus R$ a ring structure defining $(k,r)(k',r')=( kr'+rk'+kk',rr'+b(k,k'))$ (again, for the associativity of the multiplication on this  ring $K\oplus R$, some suitable axioms are necessary).  Here we have preferred not to change the sign before $b(k,k')$ to be congruent with \cite{PJ} and \cite{P}.

\begin{remark}\label{4.1}{\rm The categorical setting of our results is the following. Let $C$ be an additive category equipped with a functor
 $T\colon C\times C \to C$,
 additive in each argument, and having an object $K$ with natural and coherent isomorphisms $T(K,A) \to A$ and $T(A,K)\to A$. Then the following two categories are equivalent:
 
{\rm (a)} The category of triples $(A,m,p)$, where $A$ is an object in $C$, and $m\colon T(A,A)\to A$ and $p\colon T(A,A)\to K$ are morphisms.
 
{\rm (b)}  The category of four-tuples $(A,e,m,q)$, where 

\noindent {\rm (1)} $A$ is an object in $C$,  

\noindent {\rm (2)} $m\colon T(A,A)\to A$ is a morphism, 

\noindent {\rm (3)} $e\colon K\to A$ is a morphism making $T(K,A)\to T(A,A)\to A$ equal to the canonical isomorphism $T(K,A)\to A $, and $T(A,K)\to T(A,A)\to A$
 equal to the canonical isomorphism
 $T(A,K)\to A$, 
 and 
 
 \noindent {\rm (4)} $q\colon A\to K$ is a morphism with a kernel and such that $qe=1$.
 
 The corresponding idempotent endomorphisms are endomorphisms with a kernel. Recall that in an additive category, idempotent endomorphisms have a kernel if and only if they split \cite[Proposition~4.17]{libro}.}\end{remark}
 
 A construction similar to ours can be found in other algebraic structures as well. For instance, for asymmetric product of radical braces \cite{Catino}. Recall that a {\em radical (right) brace} is a set $S$ with two operations $+$ and $\circ$ such that
$(S, +) $ is an abelian group, $(S,\circ)$ is a group and
$(a + b)\circ c + c = a\circ c + b\circ c $ for all $a,b,c\in S$. Also recall that, if $S$ and $T$ are abelian groups, a function $b \colon T\times T\to S$ is a {\em symmetric cocycle} on $T$ with values in $S$ if $
 b(t_1,t_2)= b(t_2,t_1)$, $b(0,0)=0$ and $b(t_1+t_2,t_3)+b(t_1,t_2)=b(t_1,t_2+t_3)+b(t_2,t_3)$
 for every $t_1,t_2,t_3\in T$. This is the analogue of our $k$-bilinear form (notice the vague similarity between the latter condition and Condition (c) in the statement of Theorem~\ref{associative} --- it is necessary to get associativity). If $S$ and $T$ are  radical braces, $b \colon T\times T\to S$ is a symmetric cocycle on $(T,+)$ with values in $(S, +)$,  and $\alpha\colon S\to \Aut(T)$ is a suitable morphism of the multiplicative group $S$ into the group of automorphisms of the radical brace $S$, then the {\em asymmetric product} of $T$ by $S$ \cite[Theorem~3]{Catino} is the cartesian product $S\times T$ with addition and multiplication given by $$( s_1,t_1)+(s_2,t_2)=(s_1+s_2+b(t_1,t_2),t_1+t_2)$$ and $$( s_1,t_1)\circ(s_2,t_2)=(s_1\circ s_2,(\alpha(s_2)(t_1))\circ t_2).$$ Clearly, this addition is related to the topic of this paper.
 
 \bigskip
 
We are grateful to Professor George Janelidze for some suggestions concerning the topic of this paper, and in particular for Example~\ref{Galois} and Remark~\ref{4.1}.

 \end{document}